\newtheorem{theorem}{Theorem}[section]
\newtheorem{thmy}{Theorem}
\newtheorem{lemma}[theorem]{Lemma}
\def\barr{\begin{array}}
\def\earr{\end{array}}
\title{Finite groups with dense solitary subgroups}
\author{Marius T\u arn\u auceanu}
\date{December 11, 2024}
\begin{document}

\maketitle

\begin{abstract}
A group $G$ is said to have dense solitary subgroups if each non-empty open interval in its subgroup lattice $L(G)$ contains a solitary subgroup. In this short note, we find all finite groups satisfying this property.
\end{abstract}

{\small
\noindent
{\bf MSC2000\,:} Primary 20D60; Secondary 20D30, 20E99.

\noindent
{\bf Key words\,:} finite groups, ZM-groups, solitary subgroups, density.}

\section{Introduction}

Let $\mathcal{X}$ be a property pertaining to subgroups of a group. We say that a group $G$ has \textit{dense $\mathcal{X}$-subgroups} if for each pair $(H,K)$ of subgroups of $G$ such that $H<K$ and $H$ is not maximal in $K$, there exists a $\mathcal{X}$-subgroup $X$ of $G$ such that $H<X<K$. Groups with dense $\mathcal{X}$-subgroups have been studied for many properties $\mathcal{X}$: being a normal subgroup \cite{10}, a pronormal subgroup \cite{15}, a normal-by-finite subgroup \cite{5}, a nearly normal subgroup \cite{6} or a subnormal subgroup \cite{7}.

In our note, we will focus on the property of being a solitary subgroup. We recall that a subgroup $H$ of a group $G$ is called a \textit{solitary subgroup} of $G$ if $G$ does not contain another isomorphic copy of $H$. By Theorem 25 of \cite{9}, the set ${\rm Sol}(G)$ of solitary subgroups of $G$ forms a lattice with respect to set inclusion, which is called the \textit{lattice of solitary subgroups} of $G$. This has been investigated in several recent paper, such as \cite{1,4,13}.

We also recall that a \textit{ZM-group} is a finite non-abelian group with all Sylow subgroups cyclic. By \cite{8}, such a
group is of type
\begin{equation}
{\rm ZM}(m,n,r)=\langle a, b \mid a^m = b^n = 1,\, b^{-1} a b = a^r\rangle,\nonumber
\end{equation}where the triple $(m,n,r)$ satisfies the conditions
\begin{equation}
{\rm gcd}(m,n) = {\rm gcd}(m, r-1) = 1 \mbox{ and } r^n \equiv 1 \hspace{1mm}({\rm mod}\hspace{1mm}m).\nonumber
\end{equation}It is clear that $|{\rm ZM}(m,n,r)|=mn$ and $Z({\rm ZM}(m,n,r))=\langle b^d\rangle$, where $d$ is the multiplicative order of
$r$ modulo $m$, i.e.
\begin{equation}
d=o_m(r)={\rm min}\{k\in\mathbb{N}^* \mid r^k\equiv 1 \hspace{1mm}({\rm mod} \hspace{1mm}m)\}.\nonumber
\end{equation}The subgroups of ${\rm ZM}(m,n,r)$ have been completely described
in \cite{3}. Set
\begin{equation}
L=\left\{(m_1,n_1,s)\in\mathbb{N}^3 \,:\, m_1|m, n_1|n,s<m_1, m_1|s\frac{r^n-1}{r^{n_1}-1}\right\}.\nonumber
\end{equation}Then there is a bijection between $L$ and the subgroup lattice
$L({\rm ZM}(m,n,r))$ of ${\rm ZM}(m,n,r)$, namely the function
that maps a triple $(m_1,n_1,s)\in L$ into the subgroup $H_{(m_1,n_1,s)}$ defined by
\begin{equation}
H_{(m_1,n_1,s)}=\bigcup_{k=1}^{\frac{n}{n_1}}\alpha(n_1,s)^k\langle a^{m_1}\rangle=\langle a^{m_1},\alpha(n_1, s)\rangle,\nonumber
\end{equation}where $\alpha(x, y)=b^xa^y$, for all $0\leq x<n$ and $0\leq y<m$. Remark that $|H_{(m_1,n_1,s)}|=\frac{mn}{m_1n_1}$\,, for any $s$ satisfying $(m_1,n_1,s)\in L$.
\bigskip

Our main result is stated as follows.

\begin{theorem}
A finite group $G$ has dense solitary subgroups if and only if $G$ is either cyclic or a ZM-group ${\rm ZM}(m,n,r)$, where $m$ is prime, $d=o_m(r)$ is prime and $n=d^{\alpha}p^{\beta}$ with $p$ prime, $p\neq d$, $\alpha\geq 1$ and $\beta\in\{0,1\}$.
\end{theorem}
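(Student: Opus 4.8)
First I would dispose of the cyclic case and reduce everything else to ZM-groups. If $G$ is cyclic then for every divisor of $|G|$ there is exactly one subgroup, so every subgroup of $G$ is solitary; and if $H<K$ with $H$ not maximal in $K$ then $[K:H]$ is composite, so $K$ has a subgroup $X$ with $H<X<K$. Conversely, suppose $G$ has dense solitary subgroups and is not cyclic. I claim every Sylow subgroup of $G$ is cyclic. If not, then $G$ has a subgroup $E\cong C_p\times C_p$, or else $p=2$ and $G$ has a subgroup isomorphic to $Q_8$. In the first case the open interval $(1,E)$ of $L(G)$ consists of the $p+1\ge 3$ subgroups of order $p$ of $E$, all isomorphic to $C_p$, so none of them is solitary; in the second case the open interval $(Z(Q_8),Q_8)$ consists of the three cyclic subgroups of order $4$ of $Q_8$, again none solitary. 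Either way we produce a non-empty open interval of $L(G)$ containing no solitary subgroup, a contradiction. Hence all Sylow subgroups of $G$ are cyclic, and since $G$ is non-cyclic it is non-abelian, so $G={\rm ZM}(m,n,r)$.

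Next I would establish two facts about $G={\rm ZM}(m,n,r)$ from the description of $L(G)$ recalled above. Since ${\rm gcd}(m,n)=1$, the pair $(m_1,n_1)$ is determined by the order $\frac{mn}{m_1n_1}$ of $H_{(m_1,n_1,s)}$; moreover, for fixed $(m_1,n_1)$ all the $H_{(m_1,n_1,s)}$ are isomorphic, because each is an extension of $\langle a^{m_1}\rangle\cong C_{m/m_1}$ by a cyclic group of order $n/n_1$ acting on $\langle a^{m_1}\rangle$ by $x\mapsto x^{r^{n_1}}$, independently of $s$. Consequently a subgroup of $G$ is \emph{solitary if and only if it is the unique subgroup of its order}. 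Counting admissible values of $s$ shows that the number of subgroups of order $\frac{mn}{m_1n_1}$ equals ${\rm gcd}\!\bigl(\frac{r^n-1}{r^{n_1}-1},m_1\bigr)$, and a short computation with the multiplicative orders $o_q(r)$ ($q\mid m_1$ prime) shows this equals $1$ exactly when $o_q(r)\mid n_1$ for every prime $q\mid m_1$. Taking $H=\langle b\rangle=H_{(m,1,0)}$ and $K=G$, the subgroups strictly between them are exactly the $\langle a^{m_1},b\rangle$ with $1<m_1<m$, each non-solitary (as $o_q(r)>1$ whenever $q\mid m$, since ${\rm gcd}(m,r-1)=1$); hence if $m$ is composite, $(\langle b\rangle,G)$ is a bad interval. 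So $m$ must be prime.

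Assume now $m$ prime, and write $d=o_m(r)$. Then the subgroups of $G$ are the type-I subgroups $A_{n_1}=\langle a,b^{n_1}\rangle$ ($n_1\mid n$), each the unique subgroup of its order and hence solitary, and the cyclic type-II subgroups $B_{n_1,s}=\langle b^{n_1}a^s\rangle$ of order $n/n_1$, of which there is one when $d\mid n_1$ and $m$ when $d\nmid n_1$. Thus $B_{n_1,s}$ is solitary if and only if $d\mid n_1$, equivalently if and only if its order $t=n/n_1$ divides $n/d$. So the non-solitary subgroups are the cyclic subgroups of order coprime to $m$ whose order does not divide $n/d$. Analysing an open interval $(H,K)$ whose interior is entirely non-solitary: if the interior contains two incomparable members $Y_1,Y_2$, then either $K=\langle Y_1,Y_2\rangle$ is cyclic and the whole interval lies inside it, or $\langle a\rangle\le K$, $K$ is type I, and $\langle a,H\rangle$ is then a type-I (hence solitary) subgroup with $H<\langle a,H\rangle<K$ — a contradiction; if the interior is a single cyclic subgroup it again lies inside a cyclic subgroup of $G$. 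In all surviving cases the interval lies in a cyclic subgroup, so the whole question reduces to the divisor lattice $D(n)$: \emph{$G$ has dense solitary subgroups iff every interval $[t_1,t_2]$ of $D(n)$ with $t_2/t_1$ composite contains, strictly inside, a divisor of $n/d$.}

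Finally I would prove that this combinatorial condition holds precisely when $d$ is prime and $n=d^{\alpha}p^{\beta}$ with $p$ prime, $p\neq d$, $\alpha\ge 1$, $\beta\in\{0,1\}$. Sufficiency is a routine check in $D(d^{\alpha})$ and $D(d^{\alpha}p)$. For necessity: the interval $[1,n]$ forces $n$ to be prime when $n=d$; taking $t_1,t_2$ that use up the full power of every prime dividing $d$ shows that the part $M$ of $n$ coprime to $d$ must be $1$ or a prime $p$ (otherwise $[t_1,t_1M]$ has composite ratio and no divisor of $n/d$ inside); and if $d$ were composite one produces, from the prime-power structure of $d$ together with $M\in\{1,p\}$, an interval of composite ratio that no divisor of $n/d$ can enter. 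Since $d$ prime makes the $d$-part of $n$ a power of $d$, this yields $n=d^{\alpha}p^{\beta}$ as required. I expect the main obstacle to be exactly this last step: organising the number-theoretic bookkeeping (the interplay of the $o_q(r)$ with the divisor lattice) and the combinatorial case analysis cleanly, together with making fully rigorous the reduction, in the previous paragraph, of arbitrary bad intervals to bad intervals lying inside a cyclic subgroup.
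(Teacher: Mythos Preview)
Your proposal is correct and follows the paper's overall arc --- reduce to ZM-groups, force $m$ prime via the interval $(\langle b\rangle,G)$, then pin down $d$ and $n$ --- but the execution differs in two places worth noting. First, for the Sylow reduction the paper observes that the dense-solitary property is inherited by subgroups, so each Sylow $p$-subgroup has it, and then invokes a separate lemma (using Theorem~A) classifying $p$-groups with dense solitary subgroups; your direct construction of a bad interval inside $G$ from a $C_p\times C_p$ or $Q_8$ subgroup is the same content compressed into one step. Second, and more substantively, to identify ${\rm Sol}({\rm ZM}(m,n,r))$ the paper cites two external results (in a ZM-group two subgroups are conjugate iff they have the same order, whence solitary $=$ normal, together with a published description of the normal subgroups), whereas you recover the same conclusion by directly counting the admissible parameters $s$; your route is more self-contained but yields exactly the paper's Lemma~2.2. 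For the endgame on $n$, the paper simply tests the intervals $(\langle b^d\rangle,\langle b\rangle)$ and $(\langle b^{n'}\rangle,\langle b\rangle)$ (where $n=d^{\alpha}n'$, $d\nmid n'$) rather than first abstracting, as you do, to a statement about the divisor lattice $D(n)$; in fact your combinatorial condition is most cleanly settled by exactly those two intervals ($[n/d,n]$ forces $d$ prime, $[d^{\alpha},n]$ forces $n'\in\{1,p\}$), so the two arguments converge. The paper declares the converse ``obvious'' and does not carry out the reduction you sketch; that reduction (every bad interval has $K$ of type~II) is routine once one notes that whenever $K$ is type~I and $H<K$ is not maximal, $\langle a,H\rangle$ is a type-I, hence solitary, subgroup strictly between them, so your anticipated obstacle there is mild.
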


Note that the symmetric group $S_3={\rm ZM}(3,2,2)={\rm SmallGroup}(6,1)$ and ${\rm ZM}(13,6,2)={\rm SmallGroup}(78,4)$ are examples of such groups with $d=2$, $\alpha=1$, $p=3$ and $\beta=0$ or $\beta=1$, respectively. Also, the dicyclic group ${\rm Dic}_3={\rm ZM}(3,4,2)={\rm SmallGroup}(12,1)$ and ${\rm ZM}(13,12,12)={\rm SmallGroup}(156,4)$ are examples of such groups with $d=\alpha=2$, $p=3$ and $\beta=0$ or $\beta=1$, respectively.

For the proof of Theorem 1.1, we need the following well-known result (see e.g. (4.4) of \cite{12}, II).

\begin{thmy}
A finite $p$-group has a unique subgroup of order $p$ if and only if it is either cyclic or a generalized quaternion $2$-group.
\end{thmy}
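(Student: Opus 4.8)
The plan is to prove the two implications separately: the easy one by direct computation, the other by induction on $|G|$. For the reverse implication, a cyclic $p$-group obviously has a unique subgroup of order $p$, and in $Q_{2^{k}}=\langle a,b\mid a^{2^{k-1}}=1,\ b^{2}=a^{2^{k-2}},\ bab^{-1}=a^{-1}\rangle$ every element outside $\langle a\rangle$ has the form $a^{i}b$ with $(a^{i}b)^{2}=b^{2}=a^{2^{k-2}}\neq 1$, so $\langle a^{2^{k-2}}\rangle$ is the only subgroup of order $2$. For the forward implication, I would first record three reductions valid when $G$ has a unique subgroup $\langle z\rangle$ of order $p$: $\langle z\rangle$ is characteristic, hence normal, hence central (a normal subgroup of order $p$ of a $p$-group is centralised, as $|{\rm Aut}(\mathbb{Z}_{p})|$ is coprime to $p$); every nontrivial subgroup of $G$ contains $\langle z\rangle$, so the hypothesis is inherited by all subgroups; and, since $\mathbb{Z}_{p}^{\,t}$ has $(p^{t}-1)/(p-1)$ subgroups of order $p$, every abelian subgroup of $G$ is cyclic, equivalently $G$ has no subgroup isomorphic to $\mathbb{Z}_{p}\times\mathbb{Z}_{p}$.

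Now I would write $|G|=p^{n}$, dispose of $n\le 2$ directly, and for $n\ge 3$ fix a maximal (hence normal, of index $p$) subgroup $M$, which by induction is cyclic or generalized quaternion. Suppose first that $M=\langle a\rangle$ is cyclic. If $M\le Z(G)$ then $G$ is abelian with a unique subgroup of order $p$, hence cyclic. Otherwise $C_{G}(M)=M$ and a coset generator $b$ of $G/M\cong\mathbb{Z}_{p}$ induces an automorphism of order $p$ on $M$; writing $b^{-1}ab=a^{r}$ and $b^{p}=a^{s}$, one has $r\equiv 1+tp^{\,n-2}$ with $p\nmid t$ when $p$ is odd, and $r\in\{-1,\,2^{\,n-2}-1,\,2^{\,n-2}+1\}$ (modulo $2^{\,n-1}$) when $p=2$. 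For $p$ odd, $[a,b]$ generates $\langle z\rangle$, so $G$ has nilpotency class $2$ with $[G,G]$ of order $p$; consequently $x\mapsto x^{p}$ is an endomorphism of $G$, and since $b$ commutes with $b^{p}$ one gets $p\mid s$, whence $ba^{-s/p}$ has order dividing $p$ and lies outside $M$ --- a second subgroup of order $p$, a contradiction; thus $G$ is cyclic (and as $M$ cannot be generalized quaternion for $p$ odd, this settles the odd case). For $p=2$, using that $b$ commutes with $b^{2}=a^{s}$ the exponent $s$ is severely restricted, and a computation of $(a^{i}b)^{2}$ shows that in each of the three shapes of $r$ there is an element of $G\setminus M$ of order $2$ --- a second subgroup of order $2$ --- except when $r=-1$ and $b^{2}=z$, which is precisely $G\cong Q_{2^{n}}$.

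It then remains to eliminate the case $p=2$ in which every maximal subgroup of $G$ is generalized quaternion (so $n\ge 4$). I would fix $M\cong Q_{2^{\,n-1}}$ and take its cyclic subgroup $A\cong\mathbb{Z}_{2^{\,n-2}}$ of index $2$: for $n\ge 5$ this $A$ is characteristic in $M$, while for $n=4$ one of the three index-$2$ cyclic subgroups of $M\cong Q_{8}$ is fixed under the action of $G/M\cong\mathbb{Z}_{2}$ on the three of them; take that one to be $A$. In either case $A\trianglelefteq G$, and since $C_{M}(A)=A$ one gets $[C_{G}(A):A]\le 2$; if $C_{G}(A)>A$ then $C_{G}(A)$ is an abelian, hence cyclic, maximal subgroup of $G$, contrary to our assumption, so $C_{G}(A)=A$ and $G/A$ embeds in ${\rm Aut}(\mathbb{Z}_{2^{\,n-2}})$ with $|G/A|=4$. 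For $n=4$ this is already impossible since $|{\rm Aut}(\mathbb{Z}_{4})|=2$; for $n\ge 5$, the inversion automorphism induced by any element of $M\setminus A$ is not a square in ${\rm Aut}(\mathbb{Z}_{2^{\,n-2}})$, so $G/A\cong\mathbb{Z}_{2}\times\mathbb{Z}_{2}$ must coincide with the unique Klein four-subgroup of ${\rm Aut}(\mathbb{Z}_{2^{\,n-2}})$, whence some $w\in G$ induces $x\mapsto x^{2^{\,n-3}+1}$ on $A$. But $\langle A,w\rangle$ is a maximal subgroup, hence $\cong Q_{2^{\,n-1}}$, which forces $w$ to invert $A$ --- contradicting the form of its action. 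This eliminates the last case and closes the induction. The routine parts are the three reductions and the odd-prime analysis; the main obstacle, I expect, will be the $2$-group bookkeeping: disposing of the modular shape $r=2^{\,n-2}+1$ and the semidihedral shape $r=2^{\,n-2}-1$ in the cyclic-maximal case, and carrying out the centraliser/automorphism reduction in the all-quaternion case.
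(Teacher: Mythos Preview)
Your argument is correct and essentially reconstructs the classical proof of this result. Note, however, that the paper does \emph{not} prove Theorem~A at all: it merely quotes it as a well-known fact, with a reference to Suzuki's \textit{Group Theory}~II, (4.4). So there is no ``paper's own proof'' to compare against; you have supplied what the paper deliberately omits.

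Your proof follows the standard route: the reverse implication is the easy direct check; for the forward implication you induct on $|G|$, pass to a maximal subgroup $M$, and split according to whether $M$ is cyclic or generalized quaternion. The odd-prime case is handled via the class-$2$ / $p$-th-power-map argument, the $p=2$ cyclic-$M$ case by running through the three order-$2$ automorphisms of $\mathbb{Z}_{2^{n-1}}$, and the all-quaternion case by the centraliser/automorphism count on a normal cyclic $A$ of index $4$. All of the computations check out, including the slightly delicate points (e.g.\ that for $n=4$ a $G$-invariant cyclic subgroup of $Q_8$ exists by the fixed-point count, and that for $n\ge 5$ the inversion automorphism is not a square in $\mathrm{Aut}(\mathbb{Z}_{2^{n-2}})$, forcing $G/A$ to be the full Klein four subgroup and producing the contradictory ``modular'' coset). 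This is exactly the line of argument one finds in standard references; the paper simply takes it for granted.
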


By a \textit{generalized quaternion $2$-group} we will understand a group of order $2^n$ for some positive integer $n\geq 3$, defined by
\begin{equation}
Q_{2^n}=\langle a,b \mid a^{2^{n-2}}= b^2, a^{2^{n-1}}=1, b^{-1}ab=a^{-1}\rangle.\nonumber
\end{equation}
 
Most of our notation is standard and will not be repeated here. Basic definitions and results on groups can be found in \cite{8,12}. For subgroup lattice concepts we refer the reader to \cite{11}.

\section{Proofs of the main results}

We start with two auxiliary results. The first one shows that cyclic groups are the unique $p$-groups with dense solitary subgroups.

\begin{lemma}
Let $G$ be a finite $p$-group with dense solitary subgroups. Then $G$ is cyclic. 
\end{lemma}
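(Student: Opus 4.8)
The strategy is to use Theorem B to cut $G$ down to a short list of candidates and then kill the bad ones by exhibiting non-empty open intervals of $L(G)$ containing no solitary subgroup. If $|G|\le p$ there is nothing to prove, so I assume $|G|\ge p^2$. The first point I would record is that any two subgroups of order $p$ of $G$ are isomorphic, so a subgroup of order $p$ is solitary in $G$ exactly when it is the \emph{unique} subgroup of order $p$; hence it suffices to produce one solitary subgroup of order $p$. To get it, I would start from the interval $(1,G)$, which is non-empty and open since $|G|\ge p^2$ makes $1$ non-maximal in $G$; density yields a solitary subgroup $K_1$ of $G$ with $1<K_1<G$. As long as $|K_i|\ge p^2$, the interval $(1,K_i)$ is again non-empty and open, so it contains a solitary subgroup $K_{i+1}$ of $G$ with $1<K_{i+1}<K_i$. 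The orders $|K_i|$ are powers of $p$ and strictly decreasing, so after finitely many steps this descending chain reaches a solitary subgroup of $G$ of order $p$. Consequently $G$ has a unique subgroup of order $p$.

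By Theorem B, $G$ is then either cyclic, in which case we are done, or a generalized quaternion $2$-group $Q_{2^n}$ with $n\ge 3$, so it remains to rule out $G\cong Q_{2^n}$. I would resist working with the interval $(Z(G),G)$ here, because for large $n$ it does contain solitary subgroups (for example the cyclic maximal subgroup of $Q_{2^n}$, together with the unique subgroup of that order inside it). Instead I would localize to a subgroup $M\le Q_{2^n}$ with $M\cong Q_8$ — one may take $M=\langle a^{2^{n-3}},b\rangle$ in the presentation above, with $M=G$ when $n=3$. Let $Z$ be the unique subgroup of $G$ of order $2$; then $Z=Z(M)$ and $Z$ is non-maximal in $M$ because $[M:Z]=4$. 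Every subgroup strictly between $Z$ and $M$ has order $4$, hence is one of the three subgroups of $M\cong Q_8$ that are isomorphic to $C_4$. These three are pairwise isomorphic subgroups of $G$, so none of them is solitary in $G$. Thus the non-empty open interval $(Z,M)$ of $L(G)$ contains no solitary subgroup, contradicting the hypothesis, and therefore $G$ is cyclic.

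The only delicate point is this last elimination of the quaternion case: one must choose the interval correctly — inside a $Q_8$-subgroup rather than inside all of $G$ — and observe that $Q_8$ has three pairwise isomorphic subgroups of order $4$, which for that reason cannot be solitary in the ambient group. Everything else — the existence of the $Q_8$-subgroup in $Q_{2^n}$, the openness of the intervals $(1,K_i)$ and $(Z,M)$, and the termination of the descending chain of solitary subgroups — is routine.
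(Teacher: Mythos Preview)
Your proof is correct and follows essentially the same route as the paper: obtain a solitary subgroup of order $p$, invoke the classification of $p$-groups with a unique subgroup of order $p$, and eliminate $Q_{2^n}$ via the interval $(Z,M)$ inside a $Q_8$-subgroup. The only difference is cosmetic: where you descend through a chain $(1,K_i)$, the paper picks a subgroup $H$ of order $p^2$ at the outset and applies density once to $(1,H)$, reaching the order-$p$ solitary subgroup in a single step.
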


\begin{proof}
Let $|G|=p^n$, where $n\in\mathbb{N}^*$. If $n\geq 2$, then $G$ contains a subgroup $H$ of order $p^2$. By hypothesis, between $1$ and $H$ there is a solitary subgroup $K$. Then $|K|=p$ and $K$ is the unique subgroup of order $p$ of $G$. By Theorem A, it follows that $G$ is either cyclic or a generalized quaternion $2$-group. Assume that $G\cong Q_{2^n}$ for some $n\geq 3$. Then $|Z(G)|=2$ and $G$ has a subgroup $Q\cong Q_8$. Since all subgroups of order $4$ of $G$ are cyclic, we infer that there is no solitary subgroup $X$ such that $Z(G)<X<Q$, contradicting the hypothesis. Thus $G$ is cyclic, as desired.
\end{proof}

The second one describes the lattice of solitary subgroups of a ZM-group.

\begin{lemma}
We have 
\begin{equation}
{\rm Sol}({\rm ZM}(m,n,r))=\left\{H_{(m_1,n_1,0)}\in L({\rm ZM}(m,n,r))\,:\, m_1|r^{n_1}-1\right\}.\nonumber
\end{equation}
\end{lemma}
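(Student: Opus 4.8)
The plan is to work entirely within the parametrisation of subgroups recalled above; write $G=\mathrm{ZM}(m,n,r)$. The first observation is that the order $\frac{mn}{m_1n_1}$ of $H_{(m_1,n_1,s)}$ already determines the pair $(m_1,n_1)$: if $H_{(m_1,n_1,s)}$ and $H_{(m_1',n_1',s')}$ have equal order then $m_1n_1=m_1'n_1'$, and since $\gcd(m,n)=1$, $m_1,m_1'\mid m$, $n_1,n_1'\mid n$, for a prime $p\mid m$ one gets $v_p(m_1)=v_p(m_1n_1)=v_p(m_1'n_1')=v_p(m_1')$ (as $p\nmid n$), and similarly at the primes of $n$; hence $m_1=m_1'$ and $n_1=n_1'$. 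Consequently every subgroup isomorphic to $H_{(m_1,n_1,s)}$ is again of the shape $H_{(m_1,n_1,s')}$, and the whole question reduces to deciding, for a fixed pair $(m_1,n_1)$, which of the subgroups $H_{(m_1,n_1,s')}$ are isomorphic to one another.

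For this I would establish two facts about the family $\{H_{(m_1,n_1,s')}\}_{s'}$. Put $Q=\frac{r^n-1}{r^{n_1}-1}\in\mathbb{N}$ and $g=\gcd(m_1,Q)$. (i) The defining condition $m_1\mid s'Q$ of $L$, combined with the elementary fact $\gcd(m_1/g,Q/g)=1$, shows that the admissible residues $s'$ are exactly the multiples of $m_1/g$ in $\{0,\dots,m_1-1\}$, i.e.\ they form the unique subgroup of order $g$ of $\mathbb{Z}/m_1\mathbb{Z}$; in particular $s'=0$ is always admissible and $G$ has exactly $g$ subgroups of order $\frac{mn}{m_1n_1}$. (ii) A direct computation with the relations of $G$ gives $a^{t}\alpha(n_1,s')a^{-t}=\alpha(n_1,s'+t(r^{n_1}-1))$, so conjugation by $a^{t}$ carries $H_{(m_1,n_1,s')}$ to $H_{(m_1,n_1,s'')}$ with $s''\equiv s'+t(r^{n_1}-1)\pmod{m_1}$; thus the conjugates of $H_{(m_1,n_1,0)}$ so obtained are precisely the $H_{(m_1,n_1,s'')}$ with $s''$ ranging over the subgroup of $\mathbb{Z}/m_1\mathbb{Z}$ generated by $r^{n_1}-1$, a subgroup of order $m_1/\gcd(m_1,r^{n_1}-1)$.

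The crux is the arithmetic identity
\[
m_1=\gcd(m_1,r^{n_1}-1)\cdot\gcd(m_1,Q)\quad\text{with the two factors coprime.}
\]
I would prove this from $r^n-1=(r^{n_1}-1)Q$ and $m_1\mid m\mid r^n-1$: no prime $p\mid m_1$ can divide both $r^{n_1}-1$ and $Q$, since $p\mid r^{n_1}-1$ forces $Q=1+r^{n_1}+\cdots+r^{n-n_1}\equiv n/n_1\pmod p$ while $p\mid m$ and $\gcd(m,n)=1$ give $p\nmid n/n_1$; therefore each prime power exactly dividing $m_1$ divides $r^{n_1}-1$ or $Q$ in full, and the identity drops out. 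In particular $g=m_1/\gcd(m_1,r^{n_1}-1)$, so the subgroup of admissible residues from (i) and the conjugacy-orbit subgroup from (ii) have the same order and hence coincide, being the unique subgroup of that order in the cyclic group $\mathbb{Z}/m_1\mathbb{Z}$. Thus all subgroups $H_{(m_1,n_1,s')}$ with $s'$ admissible are $G$-conjugate, in particular isomorphic, to $H_{(m_1,n_1,0)}$.

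Putting everything together: $H_{(m_1,n_1,s)}$ is solitary $\iff$ it is the only subgroup of $G$ of order $\frac{mn}{m_1n_1}$ $\iff$ $g=1$ $\iff$ $m_1\mid r^{n_1}-1$ (by the identity), and in that case the only admissible value is $s=0$. This is exactly the asserted description of ${\rm Sol}(\mathrm{ZM}(m,n,r))$. The step I expect to be the genuine obstacle is the arithmetic identity: the manipulations with $L$ and the conjugation formula are routine, but it is precisely the way $m_1$ meets the factorisation $r^n-1=(r^{n_1}-1)Q$, forced by $\gcd(m,n)=1$, that both upgrades the a priori criterion ``$g=1$'' to the clean divisibility $m_1\mid r^{n_1}-1$ and shows that any two distinct subgroups of the same order are conjugate.
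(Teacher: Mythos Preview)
Your argument is correct. The main difference from the paper's proof is that the paper does not compute anything: it cites \cite{2} for the fact that two subgroups of ${\rm ZM}(m,n,r)$ are conjugate if and only if they have the same order, deduces immediately that solitary subgroups coincide with normal subgroups, and then cites \cite{14} for the explicit description of the normal subgroups. You instead reprove both of these ingredients from scratch inside the Calhoun parametrisation: your count of admissible residues $s'$ together with the conjugation formula and the arithmetic identity $m_1=\gcd(m_1,r^{n_1}-1)\cdot\gcd(m_1,Q)$ is exactly a self-contained proof that all subgroups of a given order form a single conjugacy class, and your final equivalence $g=1\iff m_1\mid r^{n_1}-1$ recovers the normal-subgroup description. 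The paper's route is one line long but depends on outside references; your route is longer but entirely internal to the subgroup parametrisation and makes transparent why $\gcd(m,n)=1$ is the reason the criterion simplifies to the single divisibility $m_1\mid r^{n_1}-1$.
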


\begin{proof} By \cite{2}, two subgroups of ${\rm ZM}(m,n,r)$ are conjugate if and only if they have the same order. This implies that two subgroups of ${\rm ZM}(m,n,r)$ are isomorphic if and only if they are conjugate, and so the solitary subgroups of ${\rm ZM}(m,n,r)$ coincide with the normal ones. The result now follows from Theorem 1 of \cite{14}.
\end{proof}

We are now able to prove our main result.\newpage

\noindent{\bf Proof of Theorem 1.1.} 
Assume that $G$ has dense solitary subgroups. Then all Sylow subgroups of $G$ have this property, and so they are cyclic by Lemma 2.1. If $G$ is not cyclic, then there is a triple of positive integers $(m,n,r)$ such that $G={\rm ZM}(m,n,r)$. Note that $d=o_m(r)\neq 1$. Also, we have
\begin{equation}
{\rm Sol}(G)=\left\{H_{(m_1,n_1,0)}\in L({\rm ZM}(m,n,r))\,:\, m_1|r^{n_1}-1\right\}
\end{equation}by Lemma 2.2.

Assume that $m$ is not prime. Since $G$ is supersolvable, its subgroup $H_{(m,1,0)}=\langle b\rangle$ of index $m$ is not maximal. By hypothesis, there is $H_{(m_1,n_1,0)}\in{\rm Sol}(G)$ with
\begin{equation}
H_{(m,1,0)}<H_{(m_1,n_1,0)}<H_{(1,1,0)}=G.\nonumber
\end{equation}Then $n_1=1$ and $m_1$ is a proper divisor of $m$, a contradiction. Thus $m$ is prime and the equality (1) becomes
\begin{equation}
{\rm Sol}(G)=\left\{H_{(1,n_1,0)}\,:\, n_1\,|\,n\right\}\cup\left\{H_{(m,n_1,0)}\,:\, d\,|\,n_1\right\},
\end{equation}that is ${\rm Sol}(G)$ consists of all subgroups containing $H_{(1,n,0)}=\langle a\rangle$ and of all subgroups contained in $H_{(m,d,0)}=\langle b^d\rangle=Z({\rm ZM}(m,n,r))$. 

Since the interval $(\langle b^d\rangle,\langle b\rangle)$ of $L(G)$ contains no solitary subgroup, we infer that $\langle b^d\rangle$ is a maximal subgroup of $\langle b\rangle$, implying that $d$ is prime. 

If $d=n$, we are done. Suppose that $d\neq n$. Then either $\langle b^{\frac{n}{d}}\rangle$ is a maximal subgroup of $\langle b\rangle$ or the interval $(\langle b^{\frac{n}{d}}\rangle,\langle b\rangle)$ of $L(G)$ contains a solitary subgroup. In the first case, we get $$n=d^2 \mbox{ or } n=dp, \mbox{ where } p \mbox{ is a prime with } p\neq d.$$In the second case, let $H_{(m,n_1,0)}\in{\rm Sol}(G)$ such that
\begin{equation}
\langle b^{\frac{n}{d}}\rangle<H_{(m,n_1,0)}<\langle b\rangle.\nonumber
\end{equation}Then 
\begin{equation}
d\,|\,\frac{n}{n_1}\,|\,n.\nonumber
\end{equation}Since $d|n_1$, it follows that the set $\{d'\,:\,d\,|\,d'\,|\,n\}$ must contain divisors of $\frac{n}{d}$\,, and therefore $d\,|\,\frac{n}{d}$\,, that is $d^2\,|\,n$. Let $n=d^{\alpha}n'$ with $\alpha\geq 2$ and $d\nmid n'$. 

If $n'=1$, we are done. Suppose that $n'\neq 1$. Since there is no divisor $d'$ of $\frac{n}{d}$ such that $d^{\alpha}\,|\,d'\,|\,n$, we infer that $d^{\alpha}$ must be a maximal divisor of $n$. Thus $n'$ is prime.

The converse is obvious, completing the proof.
$\square$
\newpage

\bigskip{\bf Funding.} The author did not receive support from any organization for the submitted work.

\bigskip{\bf Conflicts of interests.} The author declares that he has no conflict of interest.

\bigskip{\bf Data availability statement.} My manuscript has no associated data.

\vspace*{5ex}\small

\hfill
\begin{minipage}[t]{5cm}
Marius T\u arn\u auceanu \\
Faculty of  Mathematics \\
``Al.I. Cuza'' University \\
Ia\c si, Romania \\
e-mail: {\tt tarnauc@uaic.ro}
\end{minipage}

\end{document}